\newtheorem{theorem}{Theorem}[section]
\newtheorem{lemma}[theorem]{Lemma}
\newtheorem{prop}[theorem]{Proposition}
\newtheorem{corollary}[theorem]{Corollary}
\theoremstyle{definition}
\newtheorem{rem}[theorem]{Remark}
\newtheorem{remark}[theorem]{Remark}
\newtheorem{jacon}[theorem]{Jacobian Conjecture}
\numberwithin{equation}{section}
\def\mmm{\mathfrak{m}}
\def\calm{\mathcal{M}}
\def\bfb{\mathbf{b}}
\def\bd{\mathbf{d}}
\def\ba{\mathbf{a}}
\def\bc{\mathbf{c}}
\def\bk{\mathbf{k}}
\def\bff{\mathbf{F}}
\def\bbc{\mathbb{C}}
\def\bbf{\mathbb{F}}
\def\bbz{\mathbb{Z}}
\def\bbk{\mathbf{k}}
\def\bba{{\mathbb{A}}}
\def\bbr{{\mathbb{R}}}
\def\bk{\mathbf{k}}
\def\End{\text{End}}
\def\Aut{\textsf{Aut}}
\def\id{\mathsf{id}}
\def\frakr{\frak{R}}
\def\frakg{\frak{G}}
\def\frake{\frak{E}}
\newcommand{\blue}[1]{{\color{blue}#1}}
\begin{document}

\title[Inverse limits of automorphisms of truncated polynomials]
{Inverse limits of automorphisms of truncated polynomials and applications related to {Jacobian} conjecture}

\author{Hao Chang, Bin Shu, Yu-Feng Yao}

\address{School of Mathematics and Statistics, and Hubei Key Laboratory of Mathematical Sciences, Central China Normal University, Wuhan, 430079, China}\email{chang@ccnu.edu.cn}
\address{School of Mathematical Sciences, Ministry of Education Key Laboratory of Mathematics and Engineering Applications \& Shanghai Key Laboratory of PMMP,  East China Normal University, No. 500 Dongchuan Rd., Shanghai 200241, China} \email{bshu@math.ecnu.edu.cn}

\address{Department of Mathematics, Shanghai Maritime University, Shanghai, 201306, China.}\email{yfyao@shmtu.edu.cn}

\subjclass[2010]{14R15, 14A10, 14A05}

\keywords{Jacobian matrix, Jacobian condition, Jacobian conjecture}

\thanks{This work is partially supported by the National Natural Science Foundation of China (Grant Nos. 12071136 and 12271345), supported in part by Science and Technology Commission of Shanghai Municipality (No. 22DZ2229014).}

\begin{abstract} In this note, we investigate Jacobian conjecture through investigation of automorphisms of polynomial rings in characteristic $p$. Making use of the technique of inverse limits, we show that under Jacobian condition for a given homomorphism $\varphi$ of the polynomial ring $\bk[x_1,\ldots,x_n]$, if $\varphi$ preserves the maximal ideals, then $\varphi$ is an automorphism.
\end{abstract}

\maketitle
\setcounter{tocdepth}{1}\tableofcontents

\section{Jacobian Conjecture}
Let $\bbf=\bbc$ or $\bbr$ in this section. Let $\theta: \bba_\bbf^n\rightarrow \bba_\bbf^n$ be a morphism of affine spaces which sends $(t_1,\ldots,t_n)\in \bba_\bbf^n$ to $(\theta_1(t_1,\ldots,t_n),\ldots, \theta_n(t_1,\ldots,t_n))\in \bba_{\bbf}^n$, and $J(\theta)$ denote the Jacobian matrix which is $({\partial\theta_i\over \partial t_j})_{i,j=1,\ldots,n}$. Set $\vartheta:=\theta^*$, which is the comorphism of $\theta$.

\begin{jacon}
{
If $\operatorname{det}(J(\theta)) \in \mathbb{F}^{\times}$, then is an automorphism of the affine algebraic variety $\mathbb A^n$, that is to say,   $\vartheta$ is an automorphism of the polynomial ring $\mathbb{F}\left[x_1, \ldots, x_n\right]$ where $x_i$ means the $i$ th coordinate function for $i=1, \ldots, n$.
}

\end{jacon}

 Also, we can talk about the Jacobian matrix $J(\vartheta)=(\partial_j\vartheta(x_i))_{n\times n}$. Obviously, both $J(\vartheta)$ and $J(\theta)$ have the same meaning. We will not distinguish them.

\begin{rem}
The reader is referred to \cite{B, DP, KM, Su} {\sl{etc.}} for the recent progress of Jacobian conjecture.
\end{rem}

\section{Characteristic $p$}
Suppose $\bk$ is an algebraically closed field of characteristic $p>0$.
Let $A(n)$ denote the truncated polynomial ring which is  by definition the quotient of $\bk[x_1,\ldots,x_n]$ by the ideal $\langle x_1-a_1, \cdots, x_n-a_n\rangle$.
As to $\Aut(A(n))$, any element is determined by its action on the generators $
x_i$, $i = 1, \ldots, n$. There is a criterion judging when an algebra endomorphism $\varphi$ of
$A(n)$ is an automorphism, that is when and only when $\varphi$ stabilizes the unique maximal ideal
of $A(n)$ and the determinant $\textsf{det}({J}(\varphi))$  is invertible in $A(n)$ where ${J}(\varphi)$ denotes the Jacobian matrix $(\partial_j\varphi(x_i))_{n\times n}$ (see \cite{Wil}).

\section{Inverse limits for truncated polynomials in characteristic $p$}
Still suppose $\bbk$ is a field of characteristic $p>0$. Denote by $P(n)$ the polynomial ring $\bbk[x_1,\ldots,x_n]$.

For a given $n$-tuple $\ba=(a_1,\ldots,a_n)\in\bk^n$, set $\mmm_\ba=\langle x_1-a_1, \cdots, x_n-a_n\rangle$, which is a maximal ideal of $P(n)$ generated by $x_i-a_i$ for $i=1,2,\dots,n$. Let $R:=P(n)$ and
$$R_s(\ba)=P(n)\slash \langle(x_1-a_1)^p, \ldots,(x_n-a_n)^p\rangle^{p^{s-1}}$$
 where $s=1,2,\ldots$, and $\langle(x_1-a_1)^p, \ldots,(x_n-a_n)^p\rangle$ denotes the ideal of $P(n)$ generated by $(x_1-a_1)^p, \ldots,(x_n-a_n)^p$.  Especially,  $R_1(\ba)$ is the ``restricted" truncated polynomial ring $A_\ba(n)=P(n)\slash \langle(x_1-a_1)^p, \ldots,(x_n-a_n)^p\rangle$.

Keep in mind that $\bbk[x_1,\ldots,x_n]=\bbk[x_1-a_1,\ldots,x_n-a_n]$. If we take $y_i=x_i-a_i$, then $R_s(\ba)$ can be regarded $R_s:=R_s(\mathbf{0})$ for $\mathbf{0}=(0,\ldots,0)$. So, any results for the ``restricted" truncated polynomial, i.e. the case with $\ba=\mathbf{0}$, can be converted to the version of the general case $\ba$.

\begin{lemma}\label{lem: 3.0}
The automorphism group $\Aut(R_s(\ba))$ consists of ring homomorphisms $\sigma$ satisfying the following items
\begin{itemize}
\item[(\blue{1})] The ideal $\overline{\mmm_\ba}$ generated by $\overline{x_i-a_i}$ with $i=1,\ldots,n$ is preserved by $\sigma$ where $\overline{x_i-a_i}$ denotes the image of $x_i-a_i$ in the quotient.
\item[(\blue{2})] The determinant $\det(J(\sigma))$ is an invertible element {in $R_s(a)$.}
\end{itemize}
\end{lemma}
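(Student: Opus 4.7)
The plan is to prove both implications of the characterization, after first reducing to the case $\ba = \mathbf{0}$ via the change of variables $y_i := x_i - a_i$ noted just before the statement. Write $R_s := R_s(\mathbf{0})$ and $\mmm := \langle \overline{x_1}, \ldots, \overline{x_n}\rangle$: this is a finite-dimensional local $\bbk$-algebra with nilpotent maximal ideal, and it inherits a natural grading from $P(n)$ because the defining ideal is generated by homogeneous elements (the $\mmm$-adic filtration coincides with the grading).

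For the direction \emph{automorphism $\Rightarrow$ (1) and (2)}, condition (1) will be automatic: $\mmm$ coincides with the nilradical of $R_s$, so any ring endomorphism sends nilpotents to nilpotents and hence stabilizes $\mmm$. For condition (2), I will first observe that the formal partial derivatives $\partial_j$ descend to well-defined operators on $R_s$, since in characteristic $p$ one has $\partial_j(x_k^p) = 0$ and the defining ideal is generated by products of such $x_k^p$'s. Applying the chain rule for formal partial derivatives to $\sigma^{-1} \circ \sigma = \id$ then yields the matrix identity $\sigma^{-1}(J(\sigma)) \cdot J(\sigma^{-1}) = \mathrm{Id}_n$; taking determinants and applying $\sigma$ shows that $\det J(\sigma)$ is a unit in $R_s$.

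For the reverse direction \emph{(1) and (2) $\Rightarrow$ automorphism}, I will show $\sigma$ is surjective, which is enough because $R_s$ is finite-dimensional over $\bbk$. The key input is that in a local ring, invertibility of $\det J(\sigma)$ is equivalent to nonvanishing of its image in the residue field $\bbk = R_s/\mmm$; consequently, the induced $\bbk$-linear map on $\mmm/\mmm^2$ sending $\overline{x_i} \mapsto \sigma(\overline{x_i}) \bmod \mmm^2$ (which is well-defined by condition (1)) is an isomorphism. I will then proceed by descending induction on $k$ to prove $\mmm^k \subseteq \im(\sigma)$. The base case $\mmm^N = 0$ for $N$ sufficiently large is trivial. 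For the inductive step, since $\gr R_s$ is generated in degree $1$ by the $\overline{x_i}$, the elements $u_i := \sigma(\overline{x_i})$ span $\mmm/\mmm^2$, hence degree-$k$ monomials in the $u_i$ span $\mmm^k/\mmm^{k+1}$; thus any $f \in \mmm^k$ decomposes as a $\bbk$-linear combination of such monomials (all in $\im(\sigma)$) plus a remainder in $\mmm^{k+1}$ (in $\im(\sigma)$ by induction). Taking $k = 0$ gives $R_s \subseteq \im(\sigma)$.

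The main obstacle is setting up the induction, which requires two checks: that the Jacobian is well-defined modulo the defining ideal (a short characteristic-$p$ verification that $\partial_j$ preserves it), and that $\gr R_s$ is generated in degree $1$ (immediate from $R_s$ being a graded quotient of $P(n)$ with filtration equal to grading). Everything else is Nakayama-style lifting combined with the Artinian hypothesis, and the whole argument is uniform in $s$ because the vanishing of $\partial_j(x_k^p)$ is all that is needed to make the Jacobian descend to $R_s$ regardless of the exponent $p^{s-1}$.
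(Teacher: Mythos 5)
Your proof is correct, but it takes a genuinely different route from the paper's in both directions. For ``automorphism $\Rightarrow$ (1), (2)'': the paper also gets (1) from nilpotency (via $\sigma(\overline{x_i-a_i})^{p^s}=\overline{b_i}^{p^s}=0$, essentially your nilradical observation), but for (2) it extracts the linear part $(c_{ij})$ of $\sigma$, proves $\det(c_{ij})\neq 0$ by noting that $\sigma^{-1}$ also preserves $\overline{\mmm_\ba}^2$, and then invokes nilpotency of $\overline{\mmm_\ba}$; your chain-rule identity $\sigma^{-1}(J(\sigma))\cdot J(\sigma^{-1})=\mathrm{Id}_n$ reaches the same conclusion more directly, at the cost of the (correct, and in any case needed) verification that $\partial_j$ descends to $R_s$ because $\partial_j$ kills the generators $x_{k_1}^p\cdots x_{k_{p^{s-1}}}^p$ of the defining ideal. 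For the converse, the paper argues constructively: it composes $\sigma$ with explicit elementary automorphisms $\sigma_1,\sigma_2,\ldots$ that successively cancel the degree-$2$, degree-$3$, \dots{} parts until $\sigma_m\cdots\sigma_1\sigma=\id$, whereas you prove surjectivity by descending induction on $\mmm^k$ (a Nakayama-type argument on the associated graded) and conclude bijectivity from finite-dimensionality. Your version avoids having to exhibit the correcting automorphisms and the paper's somewhat compressed ``similar arguments yield'' step; the paper's version has the merit of actually producing the inverse. One small point worth making explicit in your write-up: $\det J(\sigma)\bmod \mmm$ equals $\det(c_{ij})$ precisely because, by (1), $\sigma(\overline{x_i})$ has no constant term and $\partial_j$ of an element of $\mmm^2$ lies in $\mmm$ --- this is the link between condition (2) and your statement that the induced map on $\mmm/\mmm^2$ is an isomorphism, and it is the same computation the paper records as $\det(J(\sigma))\equiv\det(c_{ij})\ (\mathrm{mod}\ \overline{\mmm_\ba})$.
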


\begin{proof}
(1) Let $\sigma\in\Aut(R_s(\ba))$.  We show that the following two items hold.

(i) Suppose $\sigma(\overline{x_i-a_i})=\sum\limits_{j=1}^n\overline{(x_j-a_j)q_{ij}}+\overline{b_i}$ for $1\leq i\leq n$, where all $q_{ij}\in P(n), b_i\in\bbk$. Since $\sigma$ is a ring homomorphism of $P(n)$, we have $0=\sigma(\overline{x_i-a_i})^{p^s}=\overline{b_i}^{p^s}$. This implies that $b_i=0$ for all $1\leq i\leq n$, that is, $\sigma(\overline{\mmm_\ba})\subseteq \overline{\mmm_\ba}$, as desired.

(ii) It follows from (i) that we can write   $\sigma(\overline{x_i-a_i})=\sum\limits_{j=1}^nc_{ij}\,\overline{x_j-a_j}+u_i$ for $c_{ij}\in\bbk, u_i\in\overline{\mmm_\ba}^2$. We claim that the $n\times n$ matrix $(c_{ij})$ is invertible. Otherwise, there exist some $k_i\in\bbk$ and not all of them are zero such that $\sigma(\sum\limits_{i=1}^n k_i\overline{x_i-a_i})\in\overline{\mmm_\ba}^2$. Moreover, since $\sigma^{-1}(\overline{\mmm_\ba})\subseteq \overline{\mmm_\ba}$ by similar arguments as in (i), we see that $\sigma^{-1}(\overline{\mmm_\ba}^2)\subseteq \overline{\mmm_\ba}^2$. Then
$$\sum\limits_{i=1}^n k_i\overline{x_i-a_i}=\sigma^{-1}\Big(\sigma\big(\sum\limits_{i=1}^n k_i\overline{x_i-a_i}\big)\Big)\in\overline{\mmm_\ba}^2,$$
which is a contradiction. Hence, the claim follows, i.e., $\det(c_{ij})\in\bbk^{\times}$. Since $$\det(J(\sigma))\equiv\det(c_{ij})\,(\text{mod}\, \overline{\mmm_\ba})$$ and $\overline{\mmm_\ba}$ is nilpotent, it follows that $\det(J(\sigma))$ is invertible.

(2) Let $\sigma$ be a ring homomorphism of $R_s(\ba)$ satisfying (i) and (ii), we aim at showing that  $\sigma\in\Aut(R_s(\ba))$. We can assume that
$\sigma(\overline{x_i-a_i})=\sum\limits_{j=1}^nc_{ij}\,\overline{x_j-a_j}+u_i$ for $c_{ij}\in\bbk, u_i\in\overline{\mmm_\ba}^2$, where the matrix $A=(c_{ij})$ is invertible by (ii).  We can further assume that
\[
u_i=(\overline{x_1-a_1},\cdots, \overline{x_n-a_n})B_i\begin{pmatrix}
\overline{x_1-a_1}\\
\vdots\\
\overline{x_n-a_n}\\
\end{pmatrix}+u_i^{\prime}
\]
where $B_i$ is a $n\times n$ matrix, $u_i^{\prime}\in\overline{\mmm_\ba}^3$ for $1\leq i\leq n$.
Set $A^{-1}=(d_{ij})$. Define a ring homomorphism $\sigma_1$ of $R_s(\ba)$ with \[\sigma_1(\overline{x_i-a_i})=\sum\limits_{j=1}^nd_{ij}\,\overline{x_j-a_j}-
(\overline{x_1-a_1},\cdots, \overline{x_n-a_n})AB_iA^{\prime}\begin{pmatrix}
\overline{x_1-a_1}\\
\vdots\\
\overline{x_n-a_n}\\
\end{pmatrix},\,\,1\leq i\leq n.
\]
Then $\sigma_1\in\Aut(R_s(\ba))$ by (1). Then $\sigma_1\sigma$ is also a ring homomorphism of $R_s(\ba)$ satisfying (i) and (ii) with $\sigma(\overline{x_i-a_i})=\overline{x_i-a_i}+v_i$ with $v_i\in\overline{\mmm_\ba}^3$ for $1\leq i\leq n$. Similar arguments yield that there exist $\sigma_2,\cdots,\sigma_m\in\Aut(R_s(\ba))$ such that
$\sigma_m\cdots\sigma_1\sigma=\id$, so that $\sigma=\sigma_1^{-1}\cdots\sigma_m^{-1}\in\Aut(R_s(\ba))$ as desired.
\end{proof}

\begin{lemma}\label{lem: 3.1} The following statements hold.
 \begin{itemize}
  \item[(1)] The system $\{R_s(\ba)\mid s=1,2,\ldots\}$ is an inverse system of unitary commutative associative $\bbk$-algebras. Consider the inverse limit $$\frakr(\ba):=\lim_{\underset{s}{\longleftarrow}}R_s(\ba).$$
     \item[(2)] As an algebra,  $P(n)$ is isomorphic to the subalgebra of $\frakr(\ba)$ which consists of elements $(\pi_1(a),\pi_2(a),\ldots)$ in $\frakr(\ba)$ with $a$ running through $P(n)$ where $\pi_s$ stands for the canonical homomorphism from $P(n)$ to $R_s(\ba)$.
       \end{itemize}
\end{lemma}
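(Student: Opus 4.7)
The plan is to handle (1) as a straightforward verification and to concentrate the argument on the injectivity needed for (2).

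For (1), I first observe that the defining ideals
$$I_s(\ba) := \langle (x_1-a_1)^p, \ldots, (x_n-a_n)^p\rangle^{p^{s-1}}$$
form a decreasing chain, since $I_{s+1}(\ba)=I_s(\ba)^p\subseteq I_s(\ba)$. For $s\leq t$, the inclusion $I_t(\ba)\subseteq I_s(\ba)$ yields, by the universal property of the quotient, a canonical surjection $\rho_{t,s}\colon R_t(\ba)\twoheadrightarrow R_s(\ba)$ of unital commutative $\bbk$-algebras, and these automatically satisfy $\rho_{s,s}=\id$ and $\rho_{t,s}\circ\rho_{u,t}=\rho_{u,s}$. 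Thus $\{R_s(\ba),\rho_{t,s}\}$ is an inverse system, and its inverse limit $\frakr(\ba)$ exists in the category of commutative $\bbk$-algebras as the subring of $\prod_s R_s(\ba)$ consisting of coherent sequences with componentwise operations.

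For (2), I introduce the map $\Phi\colon P(n)\to\frakr(\ba)$, $a\mapsto(\pi_s(a))_{s\geq 1}$. Coherence is immediate from $\rho_{t,s}\circ\pi_t=\pi_s$, so $\Phi$ lands in $\frakr(\ba)$ and is a $\bbk$-algebra homomorphism because each $\pi_s$ is. Its image is by construction exactly the subalgebra described in the statement, so once $\Phi$ is shown to be injective, it realizes the claimed isomorphism.

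The injectivity step is the only real obstacle. The strategy is to reduce $\ker\Phi=\bigcap_{s\geq 1} I_s(\ba)=0$ to the vanishing $\bigcap_{s\geq 1}\mmm_\ba^{p^s}=0$, via the containment $I_s(\ba)\subseteq \mmm_\ba^{p^s}$: each generator of $I_s(\ba)$ is a product of $p^{s-1}$ elements of $\mmm_\ba^p$, so it has order of vanishing at least $p\cdot p^{s-1}=p^s$. The reduced claim can then be proved either by the elementary observation that, after the substitution $y_i=x_i-a_i$ (which identifies $P(n)$ with $\bbk[y_1,\ldots,y_n]$ and $\mmm_\ba$ with $\langle y_1,\ldots,y_n\rangle$), any nonzero polynomial has bounded total degree and therefore cannot lie in arbitrarily high powers of $\mmm_\ba$, or by invoking the Krull intersection theorem on the Noetherian local ring $P(n)_{\mmm_\ba}$. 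Either route yields $\ker\Phi=0$ and finishes the proof.
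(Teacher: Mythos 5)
Your proposal is correct and follows essentially the same route as the paper: build the inverse system from the decreasing chain of ideals, map $P(n)$ into the limit via the coherent sequence $(\pi_s(a))_s$, observe that surjectivity onto the described subalgebra holds by construction, and reduce everything to the vanishing of $\bigcap_{s\geq 1} I_s(\ba)$. The only difference is that the paper asserts this vanishing as a bare fact, whereas you actually justify it via $I_s(\ba)\subseteq\mmm_\ba^{p^s}$ and a degree (or Krull intersection) argument, which is a welcome addition rather than a divergence.
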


\begin{proof}
(1) For any $s\leq t$, there is a canonical homomorphism $\theta_{st}:R_t(\ba)\rightarrow R_s(\ba)$ such that $\theta_{ss}=\id$ and $\theta_{st}\theta_{tl}=\theta_{sl}$ for any $s\leq t\leq l$. Hence, they form an inverse system.

(2) Consider a map from $P(n)$ to $\frakr(\ba)$ which sends $a\in P(n)$ to $(\pi_1(a),\cdots, \pi_s(a),\cdots)\in\frakr(\ba)$. It is clearly a ring homomorphism. We further show it is injective and surjective.
The injectivity follows from the fact that $$\bigcap\limits_{s\geq 1}\langle(x_1-a_1)^p, \ldots,(x_n-a_n)^p\rangle^{p^{s-1}}=0.$$
While the surjectivity is obvious, indeed for any element $(a_1, a_2, \cdots)$ in the image, the first component $a_1$ in $P(n)$ is obviously its preimage.
\end{proof}

For a given maximal ideal $\mmm_\ba$ of $P(n)$, we denote by { $\Aut_{\text{c}}^{\ba}(P(n))$ (resp. $\End^{\ba}_{\text{c}}(P(n))$)} the continuous automorphism group (resp. endomorphism ring) with respect to the $\mmm_\ba$ which consists of automorphisms (resp. endomorphisms) sending ${\mmm_{\ba}}$ onto $\mmm_\ba$. { Here we are talking about the topology on $P(n)$ defined through  $\mmm_\ba$. }

\begin{lemma}\label{lem: 3.2} The following statements hold.
\begin{itemize}
\item[(1)] For $t>s$,  the canonical homomorphism $\theta_{st}:R_t(\ba)\rightarrow R_s(\ba)$ gives rise to the group morphism
 $\Theta_{st}:\Aut(R_t(\ba))\rightarrow \Aut(R_s(\ba))$. We have an inverse limit of algebraic groups over $\bbk$ (which is actually an ind-group)
$$\frakg:=\lim_{\underset{s}{\longleftarrow}}\Aut(R_s(\ba)).$$
And $\frakg$ is a group with identity $(e_1,e_2,\ldots)$ where $e_i$ denotes the identity of $\Aut(R_i(\ba))$ for $i=1,2,\ldots$.
\item[(2)]
As a group, the continuous automorphism group $\Aut_{\text{c}}^\ba(P(n))$ with respect to the maximal ideal $\mmm_\ba$ of $P(n)$
    is isomorphic to the subgroup of $\frakg$ which consists of elements $(\pi_1(\sigma),\pi_2(\sigma),\ldots)$ in $\frakg$ with $\sigma$ running over $\Aut_{\text{c}}^\ba(P(n))$ where $\pi_s$ stands for the canonical homomorphism from $\Aut_{\text{c}}^\ba(P(n))$ to $\Aut(R_s(\ba))$.
\end{itemize}
\end{lemma}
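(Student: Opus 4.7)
The plan is to reduce everything to Lemma \ref{lem: 3.0} via the following characteristic-$p$ observation: a ring endomorphism $\sigma$ of $R_t(\ba)$ (or of $P(n)$) preserving $\overline{\mmm_\ba}$ automatically preserves the ideal $\langle y_1^p,\ldots,y_n^p\rangle$, where $y_i=x_i-a_i$. Indeed, writing $\sigma(y_i)=\sum_j f_{ij}y_j$ (possible because $\sigma(y_i)\in\overline{\mmm_\ba}$), the Frobenius identity in characteristic $p$ gives
\[
\sigma(y_i^p)=\sigma(y_i)^p=\Bigl(\sum_j f_{ij}y_j\Bigr)^{\!p}=\sum_j f_{ij}^{\,p}\,y_j^{p}\in\langle y_1^p,\ldots,y_n^p\rangle,
\]
and therefore $\sigma$ preserves every power $\langle y_1^p,\ldots,y_n^p\rangle^{p^{s-1}}$.

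For part (1), I would use this to show that each $\sigma\in\Aut(R_t(\ba))$ descends to an endomorphism $\Theta_{st}(\sigma)$ of $R_s(\ba)$. By Lemma \ref{lem: 3.0}, $\sigma$ stabilizes $\overline{\mmm_\ba}$, so by the Frobenius argument it stabilizes the kernel of $\theta_{st}$ and descends. The descended map clearly stabilizes $\overline{\mmm_\ba}$ in $R_s(\ba)$; its Jacobian determinant is the image under $\theta_{st}$ of the invertible $\det J(\sigma)$, hence invertible. Applying Lemma \ref{lem: 3.0} again gives $\Theta_{st}(\sigma)\in\Aut(R_s(\ba))$. The relations $\Theta_{ss}=\id$ and $\Theta_{st}\Theta_{tl}=\Theta_{sl}$ follow from the corresponding relations among the $\theta_{st}$'s, so we obtain an inverse system of groups and form $\frakg:=\varprojlim_s \Aut(R_s(\ba))$; the ind-group structure comes from the fact that each $\Aut(R_s(\ba))$ is an algebraic (ind-)group, and the transition maps $\Theta_{st}$ are morphisms.

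For part (2), define $\Phi:\Aut_c^\ba(P(n))\rightarrow\frakg$ by $\Phi(\sigma)=(\pi_1(\sigma),\pi_2(\sigma),\ldots)$, where $\pi_s$ is the canonical projection $P(n)\rightarrow R_s(\ba)$. The same Frobenius calculation shows $\sigma$ preserves every $\langle y_1^p,\ldots,y_n^p\rangle^{p^{s-1}}$, so it descends to a ring endomorphism $\pi_s(\sigma)$ of $R_s(\ba)$. Since $\sigma\in\Aut_c^\ba(P(n))$ preserves $\mmm_\ba$ and is invertible, it induces an invertible linear map on the cotangent space $\mmm_\ba/\mmm_\ba^2$; its matrix is precisely $\det J(\sigma)\pmod{\mmm_\ba}$, which is therefore a nonzero scalar. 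As $\overline{\mmm_\ba}$ is nilpotent in $R_s(\ba)$, this makes $\det J(\pi_s(\sigma))$ invertible there, and Lemma \ref{lem: 3.0} yields $\pi_s(\sigma)\in\Aut(R_s(\ba))$. Compatibility $\Theta_{st}\pi_t(\sigma)=\pi_s(\sigma)$ is automatic, so $\Phi(\sigma)\in\frakg$ and $\Phi$ is a group homomorphism.

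It remains to show $\Phi$ is injective onto the claimed subgroup. Injectivity is immediate from Lemma \ref{lem: 3.1}: if $\pi_s(\sigma)=\id$ for every $s$, then $\sigma(f)-f\in\bigcap_s\langle y_1^p,\ldots,y_n^p\rangle^{p^{s-1}}=0$ for all $f\in P(n)$, so $\sigma=\id$. The target subgroup of $\frakg$ in the statement is by definition the image of $\Phi$, so $\Phi$ is an isomorphism onto it. The only step requiring real thought is the Frobenius absorption argument combined with the cotangent-space check on $\det J$; the rest is formal bookkeeping with inverse limits and direct application of Lemma \ref{lem: 3.0}.
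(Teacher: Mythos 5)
Your proposal is correct and follows essentially the same route as the paper: descend automorphisms along the quotients, verify conditions (i) and (ii) of Lemma \ref{lem: 3.0} for each $\pi_s(\sigma)$ using preservation of $\mmm_\ba$ together with invertibility and the nilpotency of $\overline{\mmm_\ba}$, and deduce injectivity of the limit map. Your explicit Frobenius computation $\sigma(y_i^p)=\sum_j f_{ij}^p y_j^p$ justifying that the kernels of the transition maps are preserved, and your use of $\bigcap_s\langle y_1^p,\ldots,y_n^p\rangle^{p^{s-1}}=0$ for injectivity, are in fact cleaner than the paper's corresponding steps (which assert well-definedness of $\theta_{st}\circ\sigma\circ\theta_{st}^{-1}$ and argue injectivity coefficient by coefficient), the only blemish being the slip where you write that the \emph{matrix} of the induced map on $\mmm_\ba/\mmm_\ba^2$ is $\det J(\sigma)\ (\mathrm{mod}\ \mmm_\ba)$ rather than its determinant.
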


\begin{proof}  (1) For any $\sigma\in \Aut(R_t(\ba))$
it is well-defined that we assign $\sigma$ to an endomorphism of $\End(R_s(\ba))$ via sending $f$ to $\theta_{st}\circ\sigma\circ \theta_{st}^{-1}(f)$ where $\theta_{st}^{-1}(f)$ is an arbitrarily given inverse image of $\theta_{st}$ at $f$. Furthermore, the endomorphism  $\Theta_{st}(\sigma):=\theta_{st}\circ\sigma\circ \theta_{st}^{-1}$ is an automorphism of $R_s(\ba)$ by Lemma \ref{lem: 3.0}, and $\Theta_{st}$ actually gives rise to a group homomorphism from $\Aut(R_t(\ba))$ to $\Aut(R_s(\ba))$.
It is not hard  shown that $\{(\Aut(R_s(\ba)), \Theta_{st})\mid \forall 1\leq s <t\}$ forms an inverse system.

By definition, the elements of $\frakg$ are of the form $(\sigma_1,\sigma_2,\ldots)$ with $\sigma_s\in \Aut(R_s(\ba))$ satisfying $\theta_{st}(\sigma_t)=\sigma_s$ for all $1\leq s<t$. So $\frakg$ forms a group with product
 $$(\sigma_1,\sigma_2,\ldots)\cdot (\sigma'_1,\sigma'_2,\ldots) =(\sigma_1\circ\sigma'_1,\sigma_2\circ\sigma'_2,\ldots).$$

(2) Recall that by definition,  $\Aut_{\text{c}}^\ba(P(n))$ equals $\{\varphi\in \Aut(P(n))\mid \varphi(\mmm_\ba)=\mmm_\ba\}$.
In the following we  denote it by $G$.  We will accomplish the arguments by steps.

(2.1) For any $\varphi\in G$, by definition $\varphi$ preserves the ideal generated by all $(x_i-a_i)^p$. So we can define $\pi_s(\varphi)\in \End(R_s(\ba))$ sending $f$ to $\overline{\varphi(f)}$ for $f\in P(n)$. It is clear that $\pi_s(\varphi)$ satisfies (i) of Lemma \ref{lem: 3.0}. We claim that $\pi_s(\varphi)$  satisfies (ii) too.

 For any $\varphi\in \Aut(P(n))_c$,  by definition we can write
\begin{align}\label{eq: auto ele}
\varphi(x_i-a_i)=\sum_jc_{ij}(x_j-a_j)+\sum_{j}q_{ij}(x_j-a_j)
\end{align}
where $q_{ij}\in \sum_k P(n)(x_k-a_k)$, and  $c_{ij}\in \bbk$. Taking into account the property of $\varphi$ that it preserves the maximal ideal
 $\mmm_\ba$ in combination with its being invertible, we have $\det(c_{ij})$ must be nonzero. Now in $R_s(\ba)$ the ideal generated by $\overline{x_i-a_i}$ is nilpotent. Hence the nonzero of $\det(c_{ij})$ implies that $J(\pi_s(\varphi))$ is invertible.

 Thus, by Lemma \ref{lem: 3.0} we have $\pi_s(\varphi)$ really belongs to $\Aut(R_s(\ba))$.

(2.2) Now we define a map from $\Aut(P(n))_\text{c}$ to $\frakg$ by sending $\varphi$ to
$(\pi_1(\varphi), \pi_2(\varphi),\ldots, \pi_i(\varphi),\ldots)$. It is readily shown that this map is well-defined. We denote it by $\Xi$ which is clearly  a group homomorphism.

(2.3) We show $\Xi$ is an injective map. Suppose $(\pi_1(\varphi), \pi_2(\varphi),\ldots, \pi_i(\varphi),\ldots)$ is the identity of $\frakg$. Then for any $s$, $\pi_s(\varphi)$ is the identity of $\Aut(R_s(\ba))$. So in the expression \eqref{eq: auto ele}, $c_{ij}=1$ when $i=j$, $0$ otherwise. We claim that all $q_{ij}=0$. If not so, then  the image  $\pi_s(\varphi)(x_i-a_i)$  is not $x_i-a_i$ in $\mmm_\ba\slash \mmm_\ba^t$ for some $t$. This means that $\pi_t(\varphi)$ is not the identity, a contradiction. Hence $\varphi$ is indeed the identity of $\Aut(P(n))$.

Summing up, we accomplish the proof.
\end{proof}

By the same arguments, we have the following facts.

\begin{lemma}\label{lem: analogue}
The following statements hold.
\begin{itemize}
\item[(1)] For $t>s$, the canonical homomorphism $\theta_{st}:R_t(\ba)\rightarrow R_s(\ba)$ gives rise to the algebra morphism
 $\vartheta_{st}:\End(R_t(\ba))\rightarrow \End(R_s(\ba))$. We have an inverse limit of algebras over $\bbk$
$$\frake:=\lim_{\underset{s}{\longleftarrow}}\End(R_s(\ba)).$$
In particular, $\frake$ is canonically an algebra.

\item[(2)] There is an algebra homomorphism $\tilde\pi$ from
the continuous endomorphism ring $\End_{\text{c}}^\ba(P(n))$ with respect to the maximal ideal $\mmm_\ba$ of $P(n)$ to the subalgebra of $\frake$ which consists of elements $(\pi_1(\upsilon),\pi_2(\upsilon),\ldots)$ in $\frake$ with $\upsilon$ running over $\End_{\text{c}}^\ba(P(n))$ where $\pi_s$ stands the canonical homomorphism from $\End_{\text{c}}^\ba(P(n))$ to $\End(R_s(\ba))$. Furthermore, $\tilde\pi$ is an isomorphism.

\item[(3)] Continue with (2). Then $\tilde\pi$ sends all injective maps to injective maps; and sends all surjective maps to  surjective maps, where we say $(\pi_1(\phi),\cdots, \pi_s(\phi), \cdots)\in\frak{C}$ is injective (resp. surjective) if all $\pi_s(\phi)$ $(s=1,2,\ldots)$ are injective (resp. surjective).
\end{itemize}
\end{lemma}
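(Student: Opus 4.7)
The overall plan is to follow the template of Lemmas \ref{lem: 3.0}--\ref{lem: 3.2}, adapting the arguments from automorphisms to arbitrary ring endomorphisms, and then to treat the injective and surjective assertions of part (3) separately.

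For part (1), I will define $\vartheta_{st}(\sigma)$ by the same recipe as in Lemma \ref{lem: 3.2}(1): lift $\bar f\in R_s(\ba)$ to some $\tilde f\in R_t(\ba)$, apply $\sigma$, then project via $\theta_{st}$. Independence of the chosen lift amounts to checking that every ring endomorphism $\sigma$ of $R_t(\ba)$ preserves $\ker\theta_{st}$, namely the image in $R_t(\ba)$ of $\langle (x_i-a_i)^p\rangle^{p^{s-1}}$. As in Lemma \ref{lem: 3.0}(1)(i), any ring endomorphism preserves $\overline{\mmm_\ba}$, and the characteristic $p$ identity $(\sum_\alpha c_\alpha y^\alpha)^p=\sum_\alpha c_\alpha^p y^{p\alpha}$ (with $y_i=x_i-a_i$) forces $\sigma$ to preserve the ideal $\langle y_i^p\rangle$, and hence every ideal power $\langle y_i^p\rangle^{p^{s-1}}$. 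The identities $\vartheta_{ss}=\id$ and $\vartheta_{st}\circ \vartheta_{tl}=\vartheta_{sl}$ are then routine, producing both the inverse system and its algebra limit $\frake$.

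For part (2), the same Frobenius computation shows that any $\upsilon\in \End_{\text{c}}^{\ba}(P(n))$ preserves each $\langle y_i^p\rangle^{p^{s-1}}$, hence induces $\pi_s(\upsilon)\in \End(R_s(\ba))$ with the obvious compatibility under $\vartheta_{st}$; so $\tilde\pi(\upsilon):=(\pi_s(\upsilon))_{s\geq 1}$ is a well-defined homomorphism. Its injectivity follows exactly as in Lemma \ref{lem: 3.1}(2): if $\pi_s(\upsilon)=\pi_s(\upsilon')$ for every $s$, then for each $f\in P(n)$ the elements $\upsilon(f)$ and $\upsilon'(f)$ agree modulo $\bigcap_s \langle y_i^p\rangle^{p^{s-1}}=0$. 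Surjectivity of $\tilde\pi$ onto the subalgebra in question is tautological, as that subalgebra is defined as the image. For the surjective case of part (3), if $\upsilon$ is surjective on $P(n)$, then any $\bar g\in R_s(\ba)$ lifts to $g=\upsilon(f)$ with $f\in P(n)$, whence $\pi_s(\upsilon)(\bar f)=\bar g$.

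The heart of the argument is the injective case of part (3), where the ``onto'' content of the continuity condition, $\upsilon(\mmm_\ba)=\mmm_\ba$, is essential. From it one obtains $g_i\in \mmm_\ba$ with $\upsilon(g_i)=y_i$ for each $i$, so that every monomial $\bar y^\alpha\in R_s(\ba)$ equals $\pi_s(\upsilon)(\bar g_1^{\alpha_1}\cdots \bar g_n^{\alpha_n})$. Since $R_s(\ba)$ is spanned by such monomials, $\pi_s(\upsilon)$ is surjective; and since $R_s(\ba)$ is finite-dimensional over $\bbk$ (its defining ideal contains each $y_i^{p^s}$), surjectivity upgrades to bijectivity, yielding injectivity of $\pi_s(\upsilon)$ as a byproduct. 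I anticipate this injective assertion to be the main obstacle: the naive attempt to transfer injectivity of $\upsilon$ directly to injectivity of $\pi_s(\upsilon)$ fails (as already visible on the Frobenius endomorphism $x\mapsto x^p$ of $\bbk[x]$), so the argument must route through surjectivity plus finite-dimensionality of the truncated quotient.
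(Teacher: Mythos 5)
Your proof is correct, and parts (1), (2) and the surjective half of (3) follow essentially the same path as the paper (which simply defers to the arguments of Lemmas \ref{lem: 3.1} and \ref{lem: 3.2}); your explicit verification that \emph{every} ring endomorphism of $R_t(\ba)$ preserves $\ker\theta_{st}$ via the Frobenius identity is exactly the well-definedness check the paper leaves implicit. Where you genuinely diverge is the injective half of (3). The paper's own argument only uses $\phi(\mmm_\ba)\subseteq\mmm_\ba$ to write $\phi(x_i-a_i)=\sum_k c_k(x_k-a_k)$, deduces $\phi\bigl((x_i-a_i)^p\bigr)\in\langle (x_1-a_1)^p,\ldots,(x_n-a_n)^p\rangle$, and then asserts that injectivity of $\phi$ passes to each $\pi_s(\phi)$; as you correctly observe with the Frobenius $x\mapsto x^p$, preservation of the ideal plus injectivity of $\phi$ does \emph{not} by itself yield $\phi^{-1}(I_s)\subseteq I_s$, so the paper's step as written is a gap. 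Your route instead invokes the surjectivity built into the definition $\upsilon(\mmm_\ba)=\mmm_\ba$ (the paper says ``onto''), producing preimages $g_i$ of the $y_i$, hence surjectivity of $\pi_s(\upsilon)$ on the finite-dimensional algebra $R_s(\ba)$, hence bijectivity. This is the argument that actually closes the proof, and it buys robustness at the price of exposing an ambiguity in the paper: under the literal ``onto'' reading your argument shows every element of $\End_{\text{c}}^{\ba}(P(n))$ induces bijections on all $R_s(\ba)$ (indeed is already an automorphism of $P(n)$, since a surjective endomorphism of a Noetherian ring is injective), so the injectivity hypothesis is never used; under the weaker reading $\upsilon(\mmm_\ba)\subseteq\mmm_\ba$ the statement is false (Frobenius again). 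You should flag this dependence on the precise definition, but given the definition as stated, your proof is the sound one.
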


\begin{proof} The first two parts can be proved by the same arguments as in the proof of Lemma \ref{lem: 3.2}. We now show (3). For simplicity,  we will briefly call the statements in (3) ``injective statements" and ``surjective statements," correspondingly.

The surjective statement can be easily known by definition. As to the injective statement, for arbitrarily given injective map $\phi\in \End(P(n))_c$ we need to show that $\tilde\pi(\phi):=(\pi_1(\phi),\cdots, \pi_s(\phi), \cdots)$ is an injective map in $\frak{C}$, which means that all $\pi_s(\phi)$ is injective in $\End(R_s(\ba))$. Note that $\phi$ preserves $\sum_{i} R(x_i-a_i)$. So $\phi(x_i-a_i)=\sum_k c_k(x_k-a_k)$ for some $c_k\in P(n)$. Correspondingly, $\phi(x_i-a_i)^p=\sum_k c_k^p(x_k-a_k)^p$. From this fact, it is easily concluded that all
 $\pi_s(\phi)$ are injective because $\phi$ is already injective.
 \end{proof}

We are now in a position to present the following main result in this section, a description of a sufficient condition for an endomorphism of $P(n)$ to be isomorphic.

\begin{prop}\label{prop: 3.3}
The following statements hold.
\begin{itemize}
\item[(1)] Suppose $\phi$ is an endomorphism of $P(n)$ satisfying that  $\textsf{det}(J(\phi))\in \bbk^\times$, and $\phi$ preserves $\mmm_\ba$, then $\phi$ is an automorphism of $P(n)$.

\item[(2)] If $\textsf{det}(J(\phi))\in \bbk^\times$, and $\phi$ sends $\mmm_\ba$ to $\mmm_\bc$ for some $\bc=(c_1,\ldots,c_n)\in \bbk^n$, then $\phi$ is an automorphism of $P(n)$.

\end{itemize}
\end{prop}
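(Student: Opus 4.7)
The plan is to exploit the inverse-limit machinery from Lemmas \ref{lem: 3.0}--\ref{lem: analogue} by reducing $\phi$ modulo the iterated ideals $\langle (x_i-a_i)^p\rangle^{p^{s-1}}$, applying the finite-level criterion of Lemma \ref{lem: 3.0} at each step, and then lifting back to an automorphism of $P(n)$.

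For part (1), since $\phi$ preserves $\mmm_\ba$, it belongs to $\End_{\text{c}}^\ba(P(n))$, and Lemma \ref{lem: analogue}(2) provides the compatible family $\tilde\pi(\phi)=(\pi_s(\phi))_{s\geq 1}$ in $\frake$. The crucial observation is that each $\pi_s(\phi)$ satisfies both items of Lemma \ref{lem: 3.0}: item (1) is immediate from $\phi(\mmm_\ba)\subseteq\mmm_\ba$ passing to the quotient, and item (2) holds because $\det(J(\pi_s(\phi)))$ is the image of $\det(J(\phi))\in\bbk^\times$ under the canonical map $P(n)\to R_s(\ba)$, which remains a unit since $\bbk^\times\subseteq R_s(\ba)^\times$. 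Thus $\pi_s(\phi)\in\Aut(R_s(\ba))$ for every $s$, so $\tilde\pi(\phi)\in\frakg$.

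To conclude that $\phi$ itself is an automorphism of $P(n)$, I would construct the inverse through the inverse-limit isomorphism. The family $(\pi_s(\phi)^{-1})_{s\geq 1}$ is compatible with the canonical maps $\Theta_{st}$, so it is a well-defined element of $\frakg\subseteq\frake$. Using that $\tilde\pi:\End_{\text{c}}^\ba(P(n))\to\frake$ is an algebra isomorphism onto its image (Lemma \ref{lem: analogue}(2)), and invoking the injectivity/surjectivity compatibilities of Lemma \ref{lem: analogue}(3) to translate termwise invertibility into termwise bijectivity, I would produce an endomorphism $\psi\in\End_{\text{c}}^\ba(P(n))$ with $\tilde\pi(\psi)=(\pi_s(\phi)^{-1})_{s\geq 1}$ and then deduce $\phi\circ\psi=\psi\circ\phi=\id$ from injectivity of $\tilde\pi$. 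The main obstacle is precisely this lifting step: one must argue that the termwise inverses assembled in $\frakg$ actually lie inside the image subalgebra $\tilde\pi(\End_{\text{c}}^\ba(P(n)))$, i.e.\ that the formal inverses obtained at each finite level fit together as genuine polynomial maps rather than merely as elements of the completion $\bbk[[x_1-a_1,\ldots,x_n-a_n]]$. This is where the stronger hypothesis $\det(J(\phi))\in\bbk^\times$ (as opposed to $\det(J(\phi))\in P(n)^\times$) is expected to play a decisive role, forcing the degrees of the coordinate components of $\psi$ to stabilize.

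Part (2) reduces to part (1) by translation. Let $\tau\in\Aut(P(n))$ be the automorphism defined by $\tau(x_i)=x_i+c_i-a_i$. Then $\tau(x_i-c_i)=x_i-a_i$, so $\tau(\mmm_\bc)=\mmm_\ba$, and hence $(\tau\circ\phi)(\mmm_\ba)\subseteq\tau(\mmm_\bc)=\mmm_\ba$. The chain rule at the level of comorphisms gives $\det(J(\tau\circ\phi))=\det(J(\phi))\in\bbk^\times$ since $\det(J(\tau))=1$. Applying part (1) to $\tau\circ\phi$ yields $\tau\circ\phi\in\Aut(P(n))$, and composition with $\tau^{-1}$ then gives $\phi\in\Aut(P(n))$.
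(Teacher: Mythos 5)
Your reduction to the finite levels $R_s(\ba)$, the verification that each $\pi_s(\phi)$ satisfies both items of Lemma \ref{lem: 3.0}, and the translation trick in part (2) all match the paper. The problem is in part (1), and it is exactly the one you flag yourself: the step ``lift the family of termwise inverses $(\pi_s(\phi)^{-1})_{s\geq 1}$ back to an element of $\End_{\text{c}}^\ba(P(n))$'' is not carried out, and it cannot be carried out from the hypotheses as you are using them. The compatible family of inverses only assembles to an automorphism of the completion $\bbk[[x_1-a_1,\ldots,x_n-a_n]]$, and nothing in the condition $\det(J(\phi))\in\bbk^\times$ together with $\phi(\mmm_\ba)\subseteq\mmm_\ba$ forces that formal inverse to be polynomial. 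Concretely, take $n=1$, $\ba=0$, $\phi(x)=x+x^p$: then $\det(J(\phi))=1\in\bbk^\times$, $\phi(\mmm_0)\subseteq\mmm_0$, every $\pi_s(\phi)$ is an automorphism of $\bbk[x]/\langle x^{p^s}\rangle$, yet $\phi$ is not surjective (its formal inverse is $\sum_{k\geq 0}(-1)^k x^{p^k}$, an honest power series). So the ``degrees stabilize'' hope is false, and your argument, as written, proves only that $\phi$ induces an automorphism of the completion.

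What rescues the paper's statement is the reading of ``preserves $\mmm_\ba$'' built into its definition of $\End_{\text{c}}^\ba(P(n))$, namely that $\phi$ sends $\mmm_\ba$ \emph{onto} $\mmm_\ba$, and the paper's proof leans on this rather than on constructing an inverse inside $\frakg$: it places $\tilde\pi(\phi)$ in $\tilde\pi(\End_{\text{c}}^\ba(P(n)))\cap\frakg$ and then uses the injectivity/surjectivity correspondence of Lemma \ref{lem: analogue}(3) to conclude $\phi\in\Aut_{\text{c}}^\ba(P(n))$. Under the ``onto'' hypothesis the surjectivity of $\phi$ is in fact immediate ($P(n)=\bbk+\mmm_\ba\subseteq\phi(P(n))$), and injectivity comes from the Jacobian condition, so the finite-level machinery is doing less work than your write-up asks of it. To repair your proof you must either adopt the ``onto'' hypothesis explicitly and use it to get surjectivity of $\phi$ directly, or supply a genuine argument that the formal inverse is polynomial --- the latter is not available, as the example above shows.
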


\begin{proof} (1) Note that $\phi$ gives rise to an endomorphism of $R_s(\ba)$ for any $s=1,\ldots$. By Lemma \ref{lem: analogue}, we have an injective algebra homomorphism from $\End_{\text{c}}^\ba(P(n))$ to $\frake$. Still  denote the image of $\tilde \pi$ at $\phi$ by $(\cdots,\pi_s(\phi),\cdots)$ by abuse of the notations. Then by the assumption $\det(J(\pi_s(\phi)))$ is invertible, and $\pi_s(\phi)$ preserves the ideal generated by $\overline{x_i-a_i}$ ($i=1,\ldots,n$). By Lemma \ref{lem: 3.0}, $\pi_s(\phi)$ belongs to $\Aut(R_s(\ba))$ satisfying that $\Theta_{st}(\pi_t(\phi))=\pi_s(\phi)$ for all $t>s$. Hence $\tilde \pi(\phi)=(\pi_1(\phi),\pi_2(\phi),\ldots)$ belongs to $\frakg$.

We claim that  $\phi\in \Aut_{\text{c}}^\ba(P(n))$.
Note that all $\pi_s(\phi)$ ($s=1,2,\ldots$) are already known  in $\Aut(R_s(\ba))$, and $\phi$ preserves $\mmm_\ba$, i.e. $\phi\in\End_{\text{c}}^\ba(P(n))$. So by Lemma \ref{lem: 3.2} we have   $\tilde\pi(\phi)\in \tilde\pi(\End(P(n))_c\cap \frakg$. Note that  by Lemma \ref{lem: analogue}  $\tilde\pi$ is an isomorphism which sends surjective maps to surjective maps, and sends injective maps to injective maps. This implies that $\phi$ must fall in $\Aut_{\text{c}}^\ba(P(n))$.


(2) The morphism arising from the mapping $(x_i-a_i)\mapsto (x_i-c_i)$ for $i=1,2,\cdots,n$ obviously gives rise to an automorphism of $P(n)$.  From the combination of this point with (1), the part (2) follows.
\end{proof}

\begin{remark} In \cite{YSL}, the authors studied inverse limits for restricted Lie algebras and their representations.
\end{remark}

\section{A strong Jacobian result in characteristic $p$}
Suppose $\bk$ is an algebraically closed  of characteristic $p>0$. Denote by $\calm$ the set of maximal ideals of $P(n)$. Then $\calm$ coincides with the set $\{\mmm_\ba\mid \ba\in\bbk^n\}$. So we have the following immediate consequence of Proposition \ref{prop: 3.3} which is regarded as an analogue of Jacobian conjecture.

\begin{theorem}\label{thm: Jac p}  Suppose $\bk$ is an algebraically closed field of characteristic $p>0$, and $\varphi$ is a ring endomorphism of $P(n)$ over $\bk$.
If $\textsf{det}(J(\varphi))\in \bbk^\times$, and $\varphi$ preserves $\calm$,  then $\varphi$ is an automorphism of $P(n)$.
\end{theorem}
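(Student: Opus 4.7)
The plan is to obtain this as a direct corollary of Proposition~\ref{prop: 3.3}(2), by unpacking what ``preserves $\calm$'' means and producing a single pair $(\ba,\bc)$ to which that proposition applies.

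First I would use Hilbert's Nullstellensatz: since $\bbk$ is algebraically closed, $\calm$ is in bijection with $\bbk^n$ via $\ba \leftrightarrow \mmm_\ba$. The hypothesis that $\varphi$ preserves $\calm$ says that $\varphi^{-1}(\mmm) \in \calm$ for every $\mmm \in \calm$. Pick any convenient point, say $\bc = \mathbf{0}$; then $\varphi^{-1}(\mmm_\bc)$ is a maximal ideal of $P(n)$, and hence there is a unique $\ba \in \bbk^n$ with $\varphi^{-1}(\mmm_\bc) = \mmm_\ba$. Since $\mmm_\ba \subseteq \varphi^{-1}(\mmm_\bc)$ is equivalent to $\varphi(\mmm_\ba) \subseteq \mmm_\bc$, we have exhibited a pair $(\ba,\bc) \in \bbk^n \times \bbk^n$ along which $\varphi$ ``sends $\mmm_\ba$ to $\mmm_\bc$'' in the sense of Proposition~\ref{prop: 3.3}(2).

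Next I would apply Proposition~\ref{prop: 3.3}(2) directly: the Jacobian hypothesis $\det(J(\varphi)) \in \bbk^\times$ is built into the assumption of the theorem, and the geometric condition $\varphi(\mmm_\ba) \subseteq \mmm_\bc$ is precisely what the previous paragraph supplied. Hence $\varphi$ is an automorphism of $P(n)$, as desired.

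All of the substantive work is packed into Proposition~\ref{prop: 3.3}, whose proof rests on the inverse-limit machinery of Lemmas~\ref{lem: 3.1}--\ref{lem: analogue} together with the automorphism criterion of Lemma~\ref{lem: 3.0}; the translation trick in Proposition~\ref{prop: 3.3}(2) reduces the general ``sends $\mmm_\ba$ to $\mmm_\bc$'' case to the fixed-point case \ref{prop: 3.3}(1). Given all of this already in hand, the only real step in Theorem~\ref{thm: Jac p} is the observation above that preservation of the \emph{set} $\calm$ produces at least one such pair $(\ba,\bc)$. There is no serious obstacle here: the theorem is genuinely immediate once the Nullstellensatz identification $\calm = \{\mmm_\ba : \ba \in \bbk^n\}$ is invoked.
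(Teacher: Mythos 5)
Your overall route---reduce to Proposition~\ref{prop: 3.3}(2) by producing a single pair $(\ba,\bc)$---is exactly the paper's; its entire proof of Theorem~\ref{thm: Jac p} is the one line ``It directly follows from Proposition \ref{prop: 3.3}(2).'' The gap is in your unpacking of ``$\varphi$ preserves $\calm$.'' You read the hypothesis as ``$\varphi^{-1}(\mmm)\in\calm$ for every $\mmm\in\calm$'' and from it extract only the containment $\varphi(\mmm_\ba)\subseteq\mmm_\bc$. Two problems. First, for a $\bbk$-algebra endomorphism $\varphi=\Phi^*$ over an algebraically closed field one always has $\varphi^{-1}(\mmm_\bc)=\mmm_{\Phi(\bc)}$, so under your reading the hypothesis is vacuous; that cannot be the intended meaning, since then the theorem would assert that $\det(J(\varphi))\in\bbk^\times$ alone forces $\varphi\in\Aut(P(n))$, which is false in characteristic $p$ (take $n=1$ and $\varphi(x)=x+x^p$: the Jacobian determinant is $1+px^{p-1}=1$, yet the image is $\bbk[x+x^p]\subsetneq\bbk[x]$). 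Second, and for the same reason, the containment $\varphi(\mmm_\ba)\subseteq\mmm_\bc$ is too weak to feed into Proposition~\ref{prop: 3.3}(2): in that same example $\varphi(\mmm_a)\subseteq\mmm_c$ for any root $c$ of $t+t^p=a$, which exists because $\bbk$ is algebraically closed. What the paper actually means by ``preserves''/``sends to''---see the definition of $\End_{\text{c}}^\ba(P(n))$ as endomorphisms sending $\mmm_\ba$ \emph{onto} $\mmm_\ba$, and the phrasing ``sends any maximal ideals onto maximal ideals'' in the characteristic-$0$ corollary---is the equality $\varphi(\mmm_\ba)=\mmm_\bc$. Proposition~\ref{prop: 3.3}(2) reduces by a translation to part (1), whose proof lives inside $\End_{\text{c}}^\ba(P(n))$ and uses surjectivity onto the maximal ideal in an essential way; with mere containment the conclusion fails.

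The repair is small: interpret the hypothesis in the forward, surjective sense (for each $\ba$ there exists $\bc$ with $\varphi(\mmm_\ba)=\mmm_\bc$), pick one such pair via the Nullstellensatz identification $\calm=\{\mmm_\ba:\ba\in\bbk^n\}$ as you do, and then the appeal to Proposition~\ref{prop: 3.3}(2) is legitimate and the rest of your argument stands.
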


\begin{proof} It directly follows from Proposition \ref{prop: 3.3}(2).
\end{proof}

\begin{remark} There have been some modular versions of Jacobian conjecture (for example, \cite{MR}).  We believe that the above result could be helpful to  understand more in this topic.
\end{remark}

\section{A counterpart of Theorem \ref{thm: Jac p} in characteristic $0$ }

Suppose $\bbf$ is an algebraically closed filed of characteristic $0$ in this section. Keep the notation $P(n)= \bbf[x_1,\ldots,x_n]$ the polynomial algebra over  $\bbf$. Let
$\phi$ be an algebra endomorphism of $P(n)$. By definition, the Jacobian matrix $J(\phi)$ of $\phi$ is the $n\times n$ size matrix $({{\partial(\phi(x_i))}\over {\partial x_j}})_{i,j=1,\ldots,n}$. Denote by $\mathcal{M}$ the set of all maximal ideals of $P(n)$.

\begin{prop} Suppose $\bbf$ is any given algebraically closed field of characteristic $0$, and $\phi$ is an algebra endomorphism of $P(n)$. If  $\textsf{det}(J(\phi))\in \bbf^\times$, then the following statements are equivalent: (i) $\phi$ preserves  $\calm$. (ii) $\phi$ is surjective.
\end{prop}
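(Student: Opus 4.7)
The direction (ii)$\Rightarrow$(i) is immediate: if $\phi$ is surjective then, since $P(n)$ is a Noetherian integral domain of Krull dimension $n$, the isomorphism $P(n)/\ker(\phi)\cong P(n)$ forces $\ker(\phi)=0$, so $\phi$ is an automorphism and in particular permutes $\calm$. The content of the proposition is therefore the converse (i)$\Rightarrow$(ii).

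For (i)$\Rightarrow$(ii), I plan to work geometrically via the morphism of varieties $\theta:\bba^n_\bbf\to\bba^n_\bbf$ dual to $\phi$. By the Nullstellensatz, $\phi(\mmm_\ba)\subseteq\mmm_\bc$ iff $\theta(\bc)=\ba$, so ``$\phi$ preserves $\calm$'' is exactly the statement that $\theta$ is surjective on closed points, while $\det J(\phi)\in\bbf^\times$ says exactly that $\theta$ is \'etale everywhere. A short tangent-space argument first shows that the Jacobian condition also forces $\phi$ to be injective (equivalently, $\theta$ dominant): if $\ker(\phi)\neq 0$, then $\theta$ would factor through a proper closed subvariety of $\bba^n$, contradicting that $d\theta$ has rank $n$ at every point. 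Since $\bba^n$ is connected and $\theta$ is \'etale, all the fibers of $\theta$ over closed points have the same finite cardinality $d=[\mathrm{Frac}(P(n)):\phi(\mathrm{Frac}(P(n)))]$, and by (i) each such fiber is nonempty.

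The crux is to show $d=1$: once this is established, $\theta$ is an \'etale bijection on closed points, hence (since \'etale plus universally injective yields an open immersion, and an open immersion surjective onto an irreducible target is an isomorphism) $\theta$ is an isomorphism of varieties, so $\phi$ is an algebra automorphism and a fortiori surjective. To prove $d=1$ I would reduce to $\bbf=\bbc$ via the Lefschetz principle (the statement is first-order over algebraically closed fields of characteristic zero), after which $\theta:\bbc^n\to\bbc^n$ becomes a surjective local biholomorphism with constant finite fiber size $d$; such a map is a $d$-sheeted topological covering of $\bbc^n$ by itself, and simple connectedness of $\bbc^n$ forces $d=1$.

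The main obstacle will be the covering-space step, since ``\'etale plus surjective'' does not by itself yield ``finite'' in the algebraic category (for instance open immersions are \'etale), so one cannot directly invoke the vanishing of $\pi_1^{\mathrm{\acute{e}t}}(\bba^n_\bbf)$ in characteristic zero. A purely algebraic alternative would be to upgrade the faithfully flat \'etale finite-type extension $\phi(P(n))\subseteq P(n)$ to a finite one, using normality of $\bba^n$, the constancy of the fiber length $d$, and Zariski's Main Theorem, and then to conclude via the triviality of the \'etale fundamental group of $\bba^n_\bbf$ in characteristic zero.
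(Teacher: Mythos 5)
Your handling of (ii)$\Rightarrow$(i) and your tangent-space argument for injectivity of $\phi$ are fine, and the latter plays the same role as the paper's generic-smoothness computation (the paper shows the fibres of $\theta$ are zero-dimensional, deduces $\dim \overline{\theta(\bba^n)}=n$, hence dominance). The genuine gap is at the crux of (i)$\Rightarrow$(ii): the sentence ``since $\bba^n$ is connected and $\theta$ is \'etale, all the fibers of $\theta$ over closed points have the same finite cardinality $d$'' is false for \'etale morphisms that are not finite. Fibre counts of a quasi-finite dominant map of $n$-folds can drop below $d=[\mathrm{Frac}(P(n)):\phi(\mathrm{Frac}(P(n)))]$ on the locus where the map fails to be proper (already $\bba^1\setminus\{0\}\to\bba^1$, $t\mapsto t^2$, is \'etale over a connected base with fibre sizes $2$ and $0$; for a hypothetical non-invertible Keller map the count drops over Jelonek's non-properness hypersurface, which surjectivity would force to lie in the image). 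Nonemptiness of every fibre --- which is all that your reading of (i) supplies --- does not repair this. Your closing paragraph concedes the difficulty, but both proposed remedies are circular: the covering-space argument and the Zariski-Main-Theorem upgrade to a finite morphism each take ``constancy of the fiber length $d$'' as an input, and that constancy is essentially equivalent to the properness/finiteness you are trying to establish. So the implication is not proved.

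It is worth noting how the paper sidesteps this. Its proof of (i)$\Rightarrow$(ii) uses ``$\phi$ preserves $\calm$'' in the stronger sense that $\phi(\mmm_\ba)$ determines a \emph{maximal} ideal, not merely a proper one: if $\theta(\bfb)=\theta(\bfb')=\ba$ with $\bfb\neq\bfb'$, then $\phi(\mmm_\ba)\subseteq\mmm_\bfb\cap\mmm_{\bfb'}$ cannot generate a maximal ideal, so each closed point has exactly one preimage. Thus (i) in the paper's sense makes $\theta$ \emph{bijective} on closed points, and the conclusion reduces to the standard fact that a bijective \'etale endomorphism of $\bba^n$ in characteristic $0$ is an isomorphism (a step the paper itself leaves implicit). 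If you adopt that reading, your $d=1$ is immediate and the covering-space machinery is unnecessary; if you keep the weaker reading, under which (i) only says the fibres are nonempty, you need a genuinely new idea to rule out $d>1$.
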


\begin{proof} Recall that the set of ring homomorphisms of $P(n)$ to $P(n)$ is  in a one-to-one correspondence with the set of morphisms of affine spaces $\bbf^n$ to $\bbf^n$. Put $X=\bbf^n$. Then there exists a morphism of affine spaces $$\Phi: X\rightarrow X$$ such that $\phi$ coincides with the comorphism $\Phi^*$ of $\Phi$, which sends $(t_1,\ldots,t_n)\in \bbf^n$ to $$(\Phi_1(t_1,\ldots,t_n),\ldots,\Phi_n(t_1,\ldots,t_n))$$
 where each $\Phi_i$ belongs to $P(n)$. Denote by $Y$ the Zariski closure of $\Phi(X)$.  Then $Y$ is an irreducible  closed subset of $X$ defined by the $\ker(\phi)$ which is a prime ideal of $P(n)$.

By definition, the Jacobian matrix  $\text{Jac}(\Phi_1,\ldots,\Phi_n):=({{\partial(\Phi_i)}\over {\partial x_j}})_{i,j=1,\ldots, n}$ coincides with $J(\phi)=({{\partial(\phi(x_i))}\over {\partial x_j}})_{i,j=1,\ldots,n}$ if identifying the coordinates with the coordinate functions. So by assumption, $\det(\text{Jac}(\Phi_1,\ldots,\Phi_n))$ is a nonzero constant in $\bbf$. Hence $\text{Jac}(\Phi_1,\ldots,\Phi_n)$ is of full rank at any point.

Keep in mind the assumption that $\bbf$ is an algebraically closed filed of characteristic $0$.
By \cite[Corollary III.10.7]{Har}, there is a nonempty open subset $V\in Y$ such that $\Phi: \Phi^{-1}(V)\rightarrow V$ is smooth. Suppose the relative dimension of this smooth morphism is $r$ which is the difference $\dim X-\dim Y$.

Now take a point $\bd=(d_1,\ldots,d_n)\in V$. Consider the affine algebraic variety $U$ annihilated by the ideal generated by $\bar\Phi_1:=\Phi_1-d_1,\ldots, \bar\Phi_n:=\Phi_n-d_n$. Then the corresponding Jacobian matrix coincides with $\text{Jac}(\Phi_1,\ldots,\Phi_n)$, consequently  is of full rank. Hence $U$ has dimension zero. Note that $U$ is the fiber of $\Phi$ at $\bd\in V$. So $\dim \Phi^{-1}(\bd)$ is zero. Thanks to the property that the fibers of a smooth morphism  are geometrically regular of equi-dimension $r$ (see \cite[Theorem III.10.2]{Har}), we have $r=\dim X-\dim Y=0$. Note that $Y$ is an irreducible closed subvariety of the irreducible variety $X$. Hence $Y=X$. This means that $\Phi$ is a dominant morphism, equivalently, $\phi$ is an injective endomorphism.

(ii)$\Rightarrow$ (i): By the above arguments, $\phi$ is already injective. So {when Condition (ii) is satisfied},
$\phi$ becomes an automorphism of $P(n)$. Consequently, (i) automatically satisfies.

(i) $\Rightarrow$ (ii): Under the assumption of (i), we want to show that $\phi$ is surjective. It amounts to showing that $\Phi$ is injective.
Keep in mind that $\bbf$ is an algebraically closed field. So any maximal ideal of $P(n)$ is of form $\mmm_\ba:=\sum_{i=1}^nP(n)(x-a_i)$ for some $\ba:=(a_1,\ldots,a_n)\in \bbf^n$.
For a given maximal ideal $\mmm_\ba$, suppose $\phi$ sends  $\mmm_\ba$ to some maximal ideal $\mmm_\bfb$ of $P(n)$ with $\bfb:=(b_1,\ldots,b_n)\in \bbf^n$. Note that $\phi$ is the comorphism of $\Phi$. Then $\Phi$ certainly sends $\bfb\in \bbf^n$ onto $\ba\in \bbf^n$. Actually, if $\ba'=\Phi(\bfb)\in \bbf^n$, from the definition of comorphisms along with the preservation of $\mathcal{M}$ under $\phi$, it follows that the annihilating ideal $\mmm_{\ba'}$ of $\ba'$ must be sent to that of $\bfb$ by $\phi$. The injective property of $\phi$ ensures that $\mmm_{\ba'}=\mmm_{\ba}$. So $\ba'=\ba$.  Conversely, suppose $\Phi$ sends $\bfb$ onto $\ba$. By the definition of comorphisms again, $\phi=\Phi^*$ sends $\mmm_\ba$ into $\mmm_\bfb$.  Furthermore, the assumption of (i) ensures $\phi(\mmm_\ba)=\mmm_\bfb$. Note that $\ba=\ba'$ implies that $\mmm_\bfb=\phi(\mmm_\ba)=\phi(\mmm_\ba')=\mmm_{\bfb'}$. Consequently, the above arguments ensure that $\Phi$ is injective.

The proof is completed.
\end{proof}

\begin{corollary} Suppose $\bbf$ is any given algebraically closed field of characteristic $0$, and $\phi$ is a ring endomorphism of $P(n)$ over $\bbf$. Suppose  $\textsf{det}(J(\phi))\in \bbf^\times$. If additionally  $\phi$ sends any maximal ideals of $P(n)$ onto maximal ideals, then  $\phi$  is an automorphism.
\end{corollary}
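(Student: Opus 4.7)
The plan is to deduce the corollary directly from the preceding proposition, since the corollary is essentially a repackaging of its content in the language of automorphisms. Given the hypotheses $\det(J(\phi))\in\bbf^{\times}$ together with the assumption that $\phi$ carries each maximal ideal of $P(n)$ onto a maximal ideal of $P(n)$, we observe that this is exactly condition (i) of the proposition (preservation of $\mathcal{M}$). By the equivalence (i)$\Leftrightarrow$(ii) established there, $\phi$ is therefore surjective.

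To upgrade surjectivity to an automorphism I still need injectivity. I would argue that this is already provided by the Jacobian hypothesis alone, as is visible from the first half of the proof of the proposition: writing $\Phi:\bbf^n\to\bbf^n$ for the morphism of affine spaces whose comorphism is $\phi$, the nonvanishing of $\det(J(\phi))$ forces $\mathrm{Jac}(\Phi_1,\ldots,\Phi_n)$ to have full rank everywhere, and by generic smoothness \cite[Corollary III.10.7]{Har} together with the equidimensionality of fibers of a smooth morphism \cite[Theorem III.10.2]{Har}, the relative dimension of $\Phi$ over the Zariski closure $Y$ of its image is $0$. Combined with the irreducibility of $X=\bbf^n$, this forces $Y=X$, so $\Phi$ is dominant, equivalently $\phi$ is injective.

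Putting the two halves together, $\phi$ is both injective (from the Jacobian hypothesis, via the algebraic-geometric argument in the proof of the proposition) and surjective (from the hypothesis on maximal ideals, via (i)$\Rightarrow$(ii) of the proposition). Hence $\phi$ is a ring automorphism of $P(n)$, as required.

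The argument has no real obstacle of its own; all of the analytical content was absorbed by the proposition. The only bookkeeping point to be careful about is to present the injectivity of $\phi$ as a consequence of the Jacobian hypothesis in isolation, so that combining it with the surjectivity extracted from (i)$\Rightarrow$(ii) genuinely yields an automorphism rather than being circular.
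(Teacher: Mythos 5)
Your proposal is correct and follows exactly the route the paper intends: the corollary is an immediate consequence of the proposition, with surjectivity coming from (i)$\Rightarrow$(ii) and injectivity already secured in the first half of the proposition's proof from the Jacobian hypothesis alone (dominance of $\Phi$ being equivalent to injectivity of $\phi=\Phi^*$). Your closing remark about keeping the injectivity argument independent of the maximal-ideal hypothesis is exactly the right bookkeeping point, and there is no gap.
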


\vskip10pt
\subsection*{\textsc{Acknowledgement}} The authors are grateful to the referee for helpful comments and suggestions.

\end{document}